\newcommand{\SpecZ}{\textrm{Spec }\mathbb{Z}}
\newcommand{\Specfun}{\textrm{Spec }\mathbb{F}_1}
\newcommand{\uloopr}[1]{\ar@'{@+{[0,0]+(-4,5)}@+{[0,0]+(0,10)}@+{[0,0] +(4,5)}}^{#1}}
\newcommand{\uloopd}[1]{\ar@'{@+{[0,0]+(5,4)}@+{[0,0]+(10,0)}@+{[0,0]+ (5,-4)}}^{#1}}
\newcommand{\dloopr}[1]{\ar@'{@+{[0,0]+(-4,-5)}@+{[0,0]+(0,-10)}@+{[0, 0]+(4,-5)}}_{#1}}
\newcommand{\dloopd}[1]{\ar@'{@+{[0,0]+(-5,4)}@+{[0,0]+(-10,0)}@+{[0,0 ]+(-5,-4)}}_{#1}}
\newcommand{\luloop}[1]{\ar@'{@+{[0,0]+(-8,2)}@+{[0,0]+(-10,10)}@+{[0, 0]+(2,2)}}^{#1}}
\newtheorem{lem}{Lemma}[section]
\newtheorem{corol}[lem]{Corollary}
\newtheorem{theor}[lem]{Theorem}
\newtheorem{prop}[lem]{Proposition}
\theoremstyle{definition}
\newtheorem{defi}[lem]{Definition}
\newtheorem{rema}[lem]{Remark}
\newcommand{\nc}{\newcommand}
\nc{\fin}{\mathrm{fin}}
\nc{\eilenberg}{\mathrm{H}}
\nc{\id}{\mathrm{id}}
\nc{\op}{\mathrm{op}}
\nc{\supp}{\mathrm{Supp}}
\nc{\supph}{\mathrm{supph}}
\nc{\set}{\mathrm{set}}
\nc{\sing}{\mathrm{sing}}
\nc{\pr}{\mathrm{Pr}}
\nc{\psets}{\mathrm{pSet}}
\nc{\defin}{\mathrm{def}}
\nc{\spec}{\mathrm{Spec}}
\nc{\Hom}{\mathrm{Hom}}
\nc{\spc}{\mathrm{Spc}}
\nc{\ch}{\mathrm{Ch}}
\nc{\cl}{\mathrm{Cl}}
\nc{\im}{\mathrm{im}}
\nc{\htp}{\mathrm{h}}
\nc{\fgt}{\mathrm{fgt}}
\nc{\perf}{\mathrm{perf}}
\nc{\ouv}{\mathrm{Ouv}}
\begin{document}

\title[Reconstructing the spectrum of ${\mathbb F}_1$ from the stable homotopy category]{Reconstructing the spectrum of ${\mathbb F}_1$ \\from the stable homotopy category}%
\author{Stella Anevski}
\address{} \email{}

\thanks{}
\date{}
\begin{abstract}
The finite stable homotopy category ${\mathcal S}_0$ has been suggested as a candidate for a category of perfect complexes over the monoid scheme $\Specfun$. We apply a reconstruction theorem from algebraic geometry to ${\mathcal S}_0$, and show that one recovers the one point topological space. %This result relies on the thick subcategory classification carried out by M.J. Hopkins and J.H. Smith. 
We also classify filtering subsets of the set of principal thick subcategories of ${\mathcal S}_0$, and of its $p$-local versions. This is motivated by a result saying that the analogous classification for the category of perfect complexes over an affine scheme provides topological information.
\end{abstract}
\maketitle
\section*{Introduction}
\noindent
Let $X$ be a scheme.  Following \cite{sga6}, one associates a category $D^{\perf}(X)$ to $X$. The category $D^{\perf}(X)$ is very close to the category of algebraic vector bundles over $X$, and is well-known for encoding fundamental invariants. Recent research shows that for a large class of schemes, the category $D^{\perf}(X)$ is just as good as $X$, provided that $D^{\perf}(X)$ is endowed with the structure of a tensor triangulated category.\\
\begin{theor} {\em (P. Balmer \cite{presh} and \cite{spc})}\label{balmer2}%Corollary $8.6$
 $$\begin{array}{rl} 
 (i) &\mbox{Two reduced noetherian schemes $X$ and $Y$ are isomorphic if and only if $D^{\perf}(X)$ }\\
 &\mbox{and $D^{\perf}(Y)$ are equivalent as tensor triangulated categories.}\\
 (ii) &\mbox{A noetherian scheme $X$ can be reconstructed up to isomorphism from the tensor}\\
 &\mbox{triangulated category $D^{\perf}(X)$.}
\end{array}\,$$
\end{theor}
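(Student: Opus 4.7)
\bigskip
\noindent
\textbf{Proof proposal.} The plan is to build a natural contravariant construction that sends a tensor triangulated category $\mathcal{T}$ to a ringed space $(\spc(\mathcal{T}), \mathcal{O}_{\mathcal{T}})$ depending only on $\mathcal{T}$ up to $\otimes$-equivalence, and then to identify this ringed space with $X$ (resp.\ $X_{\mathrm{red}}$) when $\mathcal{T} = D^{\perf}(X)$. Part $(ii)$ will follow formally from part $(ii)$'s underlying construction once we can recover a scheme from $D^{\perf}(X)$ on the nose, and part $(i)$ is obtained by applying this functorial construction to any hypothetical $\otimes$-equivalence $D^{\perf}(X) \simeq D^{\perf}(Y)$.

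First I would set $\spc(\mathcal{T})$ to be the set of \emph{prime thick $\otimes$-ideals} $\mathfrak{P} \subsetneq \mathcal{T}$, i.e.\ thick tensor ideals such that $a \otimes b \in \mathfrak{P}$ implies $a \in \mathfrak{P}$ or $b \in \mathfrak{P}$. For each object $a$ put $\supp(a) = \{\mathfrak{P} : a \notin \mathfrak{P}\}$. A short check shows $\supp(0) = \emptyset$, $\supp(\mathbf{1}) = \spc(\mathcal{T})$, $\supp(a \oplus b) = \supp(a) \cup \supp(b)$, and $\supp(a \otimes b) = \supp(a) \cap \supp(b)$, so the sets $\supp(a)$ form the closed base of a topology. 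The core topological input is then Thomason's classification of thick $\otimes$-ideals of $D^{\perf}(X)$ by specialisation-closed subsets of $X$: using it, I would construct a homeomorphism $X \xrightarrow{\sim} \spc(D^{\perf}(X))$ by sending $x \in X$ to the thick $\otimes$-ideal of perfect complexes whose cohomology vanishes at $x$, and verify that this sends the closed subset $\overline{\{x\}}$ to $\supp(\kappa(x))$ pulled back to the spectrum.

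Next I would recover the structure sheaf. Given a quasi-compact open $U \subset \spc(\mathcal{T})$, the complement corresponds via Thomason to a thick $\otimes$-ideal $\mathcal{K}_U$, and one defines $\mathcal{O}_{\mathcal{T}}(U) := \mathrm{End}_{\mathcal{T}/\mathcal{K}_U}(\mathbf{1})$, with restrictions induced by the evident Verdier quotient functors; I would then sheafify to get a sheaf of rings on $\spc(\mathcal{T})$. For $\mathcal{T} = D^{\perf}(X)$ the stalk at $x$ is $\mathcal{O}_{X,x}$ when $X$ is reduced, because Verdier localisation at $\mathcal{K}_{X \setminus \overline{\{x\}}}$ computes morphisms of perfect complexes on $\spec \mathcal{O}_{X,x}$; this gives an isomorphism of locally ringed spaces $(\spc(D^{\perf}(X)), \mathcal{O}) \cong (X, \mathcal{O}_X)$ for reduced noetherian $X$, which is exactly statement $(i)$. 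For statement $(ii)$ one must further refine the definition so that nilpotents are detected, which is the step where Balmer invokes the full derived endomorphism ring of $\mathbf{1}$ in the localised category rather than just its zeroth cohomology; this is cleanest after noting that $D^{\perf}$ remembers the whole graded endomorphism ring of $\mathbf{1}$ as part of the tt-structure.

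The main obstacle I anticipate is the structure-sheaf recovery rather than the topological reconstruction. Proving that the presheaf $U \mapsto \mathrm{End}_{\mathcal{T}/\mathcal{K}_U}(\mathbf{1})$ is already a sheaf (or at least has the correct sheafification) and that its stalks coincide with $\mathcal{O}_{X,x}$ requires controlling morphism groups in Verdier quotients; the key technical lemma is that, for $X$ noetherian, the natural map from $\mathrm{End}(\mathbf{1})$ in the quotient to $\mathcal{O}_{X,x}$ is an isomorphism, which reduces via Thomason–Neeman techniques to computations with Koszul complexes supported on $\overline{\{x\}}$. The non-reduced case in $(ii)$ is the most subtle point and is where I would most carefully follow the construction in \cite{spc}.
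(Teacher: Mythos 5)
The paper offers no proof of this theorem: it is imported verbatim from Balmer's papers \cite{presh} and \cite{spc}, and the only related material in the text is the recollection of the spectrum, supports, universality, and the Agreement proposition in Section~2. Your sketch is essentially Balmer's actual argument from \cite{spc} --- primes as prime thick $\otimes$-ideals, the homeomorphism $X\cong \spc(D^{\perf}(X))$ deduced from Thomason's classification of thick $\otimes$-ideals by specialization-closed subsets, and the structure sheaf as the sheafification of $U\mapsto \mathrm{End}_{\mathcal T/\mathcal K_U}(\mathbf 1)$ --- so the approach is the right one. One correction: your claim that this structure sheaf only recovers $\mathcal O_{X,x}$ in the reduced case, and that $(ii)$ requires a refinement detecting nilpotents, has the situation backwards. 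Since $\mathrm{End}_{D^{\perf}(\spec R)}(R)=R$ with all its nilpotents, and the Verdier quotient by $\mathcal K_U$ is (up to idempotent completion) $D^{\perf}(U)$ by Thomason--Neeman localization, the sheaf you define already returns $\mathcal O_X$ on the nose; this is exactly how $(ii)$ is proved in \cite{spc}, with no graded refinement needed. The reducedness hypothesis in $(i)$ is an artifact of the earlier method of \cite{presh}, and, as you note, $(i)$ also follows from $(ii)$ by functoriality of the reconstruction under $\otimes$-equivalences.
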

 \noindent
The reconstruction method used in $(ii)$ can be applied to any tensor triangulated category, producing a locally ringed space. Hence one could adopt the philosophy that tensor triangulated categories represent some kind of (generalized) algebro-geometric objects.\\
 \\
A natural question to pose is what the reconstruction procedure yields when applied to a tensor triangulated category which do not a priori arise as $D^{\perf}(X)$ for a scheme $X$. In this article, we apply it to the finite stable homotopy category ${\mathcal S}_0$, and to its $p$-local versions ${\mathcal S}^{p}_0$. Our reasons for choosing these particular categories include:
  $$\begin{array}{rl} 
(i) &\mbox{The category ${\mathcal S}_0$ has been suggested as an analog of $D^{\perf}(X)$ for the monoidscheme}\\
&\mbox{$\Specfun$ (cf. Remark \ref{f1}). With this in mind, one would hope to reconstruct the one}\\
&\mbox{point topological space from ${\mathcal S}_0$.} \\
 \\
 (ii) &\mbox{The category $D^{\perf}(X)$ can be regarded as a homotopy category of complexes (cf.}\\
 &\mbox{section $1.1$). Inspired by Theorem \ref{balmer2}, we think of noetherian algebraic geometry}\\
 &\mbox{as homotopy theory in categories of complexes.}
\end{array}\,$$
  \\
 
%In analogy with the homotopy category of spaces, $D^{\perf}(X)$ can be regarded as a homotopy category of complexes.

%, by inverting the class of quasi-isomorphisms in the category of complexes of ${\mathcal O}_X$-modules.% by inverting a certain class of {\em weak equivalences}  - the quasi-isomorphisms. 
%Supported by the above reconstruction theorems, one could view noetherian algebraic geometry as a kind of homotopy theory in categories of complexes. \\
% \\
\newpage
\noindent
We use P. Balmer's construction of the {\em spectrum} of a tensor triangulated category. The spectrum is a topological space with points corresponding to a certain type of triangulated full subcategories, which will be called {\em primes}. %When applied to the category $D^{\perf}(X)$ of perfect complexes over a noetherian scheme $X$, the construction recovers the underlying topological space of $X$.\\
Our main results are contained in Theorems \ref{stolt} and \ref{stolt2}. \\
 \\
 Theorem \ref{stolt} classifies the primes of ${\mathcal S}_0$ and ${\mathcal S}^{p}_0$. In particular, part $(iv)$ reveals that the topological space reconstructed from ${\mathcal S}_0$  has just one point, which is indeed consistent with viewing the finite stable homotopy category as an analog of $D^{\perf}(X)$ for the monoidscheme $\Specfun$. \\
  \\
  Theorem \ref{stolt2} classifies filtering subsets of a certain set of subcategories of ${\mathcal S}_0$ and ${\mathcal S}^{p}_0$. In particular it answers a question posed by M. B\"okstedt in the appendix of \cite{neeman}. For an affine (not necessarily noetherian) scheme $\spec R$, the analogous classification for $D^{\perf}(\spec R)$ gives information about the closed subsets of $\spec R$. \\
 \\
The exposition is organized as follows: Section $1$ begins with a brief review of tensor triangulated categories. We define the category $D^{\perf}(X)$, of perfect complexes over a scheme $X$, and describe how one can recover the underlying topological space of $X$ from the tensor triangulated structure on $D^{\perf}(X)$, provided that $X$ is noetherian. In section $2$, we summarize the construction of the spectrum of a tensor triangulated category, and state some of its properties. Section $3$ introduces the finite stable homotopy category, with emphasis on its structural properties. In particular, we state the thick subcategory theorem by M. J. Hopkins and J. H. Smith, classifying thick subcategories of the $p$-local finite stable homotopy category.  The section ends with a global version of this theorem, which is due to S. M. Chebolu. Sections $4$ and $5$ contain statements and proofs of our main results.\\
 \\
%A rather remarkable consequence of this is that noetherian algebraic geometry is closely tied to homotopy theory in categories of complexes.
%So Noetherian algebraic geometry is really homotopy theory in the category of perfect complexes. 
\section{Tensor triangulated categories}
\noindent
Recall that a {\em triangulated category} is an additive category ${\mathcal K}$, equipped with an autoequivalence $\Sigma: {\mathcal K}\to {\mathcal K}$, and a class of {\em exact triangles}, i.e. diagrams in ${\mathcal K}$ of the form
 $$\begin{array}{rl} 
P\to Q\to R\to \Sigma (P).
\end{array}\,$$
This class is subject to certain axioms, whose raison d'\^etre is to force exact triangles to encode homological information about ${\mathcal K}$ (cf. \cite[II:$1.1.1$]{verdier}). An additive functor between triangulated categories is called {\em exact} if it preserves exact triangles and commutes with the autoequivalences up to isomorphism.
\begin{defi} \label{tt}
A {\em tensor triangulated category} is a pair $({\mathcal K}, \otimes)$, consisting of a triangulated category ${\mathcal K}$, and a bifunctor $\otimes:{\mathcal K}\times{\mathcal K}\to {\mathcal K}$ which is exact in each variable.
\end{defi}
\noindent
%Examples of tensor triangulated categories come from categories of complexes on schemes. One motivation for studying them is that a noetherian scheme can be reconstructed (up to isomorphism) from its triangulated category of complexes, provided that the latter is equipped with a canonical symmetric monoidal structure. A basic ingredient in the reconstruction procedure is to relate certain types of subcategories with closed sets using a notion of support of objects. Now we give names to the subcategories that will be interesting in this respect.
%Grothendieck associates...  Examples of tensor triangulated categories come from categories of complexes on schemes. As mentioned in the
We now give names to the subcategories that will be associated with closed subsets and points under the reconstruction procedure.
\begin{defi}
Let ${\mathcal K}$ be a tensor triangulated category. A full triangulated subcategory ${\mathcal A}$ of ${\mathcal K}$ is said to be
$$\begin{array}{rl} 
- &\mbox{{\em thick}, if it is closed under retracts, i.e. $P\oplus Q\in {\mathcal A}$ forces $P\in {\mathcal A}$ and $Q\in{\mathcal A}$,}\\
- &\mbox{{\em $\otimes$-thick}, if it is thick and $Q\in {\mathcal A}$ and $P\in {\mathcal K}$ implies $P\otimes Q\in {\mathcal A}$,}\\
- &\mbox{a {\em prime of ${\mathcal K}$}, if ${\mathcal A}\neq {\mathcal K}$, ${\mathcal A}$ is $\otimes$-thick, and $P\otimes Q\in {\mathcal A}$ implies $P\in {\mathcal A}$ or $Q\in {\mathcal A}$.}
\end{array}\,$$
\end{defi}
 \newpage
\subsection{Perfect complexes and reconstruction of schemes} \label{perfect}
Let $X$ be a scheme. A {\em strict perfect complex} on $X$ is a bounded complex of locally free ${\mathcal O}_X$-modules of finite type. A complex $P$ of ${\mathcal O}_X$-modules is {\em perfect} if any point has a neighborhood $U$, such that the restriction of $P$ to $U$ is quasi-isomorphic to some strict perfect complex on $U$.\\
 \\
 %We now consider the category $D^{\perf}(X)$, whose objects are the perfect complexes on $X$, and whose morphisms are quasi-isomorphism classes of chain maps. $D^{\perf}(X)$ inherits a tensor triangulated structure from the ambient category $\ch({\mathcal O}_X-mod)$ of complexes of  in the following way:\\ 
 The category ${\mathcal O}_{X}$-mod of sheaves of ${\mathcal O}_X$-modules is abelian, and so its derived category $D({\mathcal O}_X-mod)$ is triangulated. The derived tensor product $\otimes^{\mathbb L}$ induces a monoidal structure on  $D({\mathcal O}_X$-mod$)$ which is furthermore compatible with the triangulation in the sense of Definition \ref{tt}. This tensor triangulated structure induces a tensor triangulated structure on the full subcategory $D^{\perf}(X)$ of perfect complexes in $D({\mathcal O}_X$-mod$)$.  \\
  \\
 In the next section, we summarize P. Balmer's procedure for reconstructing a noetherian scheme $X$ from the category $D^{\perf}(X)$. To get a feeling for what is abstracted there, we describe the correspondence between closed subsets of a scheme and thick subcategories of its category of perfect complexes. It relies on the following notion of {\em support} of a complex.\\
  \begin{defi}
Let $X$ be a scheme. The {\em homological support} of a perfect complex $P\in D^{\perf}(X)$ is the set
$$\begin{array}{rl} 
\supph(P):=\{x\in X  |  P_x\ncong 0 \}.
\end{array}\,$$
\end{defi}
\noindent
 Recall that a subset $Y$ of a scheme is said to be {\em specialization closed} if
 $$\begin{array}{rl} 
y\in Y \Rightarrow \overline{{\{y\}}}\subset Y.
\end{array}\,$$
For a noetherian scheme $X$, specialization closed subsets are in bijection with $\otimes$-thick subcategories of $D^{\perf}(X)$ (cf. \cite{thomason}). Under this bijection, a thick subcategory ${\mathcal A}$ corresponds to the set $\bigcup_{P\in {\mathcal A}}\supph(P)$, i.e. the points $x\in X$ for which there exists a complex in ${\mathcal A}$ with non-vanishing homology at $x$. In the other direction, a specialization closed subset $Y$ corresponds to the full triangulated subcategory consisting of complexes $P$ such that $\supph(P)\subset Y$, i.e. complexes acyclic off $Y$.\\
 \\
 \section{The spectrum of a tensor triangulated category}
\noindent
Following \cite{spc}, we construct a functor $\spc(-)$ from the category of tensor triangulated categories to the category of topological spaces. First, we define it on objects. %The construction is functorial for monoidal exact functors, and when applied to the category $D^{\perf}(X)$, of perfect complexes on a noetherian scheme $X$, one recovers the underlying topological space up to homeomorphism.
 \begin{defi} \label{thespectrum}
 The {\em spectrum} of a tensor triangulated category $({\mathcal K},\otimes)$ is the set of primes of ${\mathcal K}$:
 $$\begin{array}{rl} 
 \spc({\mathcal K}):=\{{\mathcal P}  |  {\mathcal P} \mbox{ is a prime of ${\mathcal K}$}\}.
\end{array}\,$$
We endow the spectrum with a topology by declaring closed sets to be those of the form
$$\begin{array}{rl} 
Z({\mathcal F}):=\{{\mathcal P}\in \spc({\mathcal K})  |  {\mathcal F}\cap {\mathcal P}=\emptyset\},
\end{array}\,$$
for some family of objects ${\mathcal F}\subset {\mathcal K}$.
\end{defi}
\noindent
It is easily checked that the sets $Z({\mathcal F})$ actually satisfy the axioms for a collection of closed sets of a topology.\\ %(cf. \cite{spc}). \\
 \\
\begin{prop} {\em (Functoriality, \cite[Proposition $3.6$]{spc})}\\
Given a monoidal exact functor ${\mathcal K}\xrightarrow{F} {\mathcal L}$ between tensor triangulated categories, the map
$$\begin{array}{rl} 
 \spc({\mathcal L})&\xrightarrow{\spc (F)} \spc({\mathcal K}),\\
 Q &\mapsto F^{-1}(Q)
\end{array}\,$$
is well-defined and continuous. 
\end{prop}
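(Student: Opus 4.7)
The plan is to verify two things: that $F^{-1}(Q)$ is indeed a prime of $\mathcal{K}$ whenever $Q$ is a prime of $\mathcal{L}$ (well-definedness), and that the preimage under $\spc(F)$ of any basic closed set $Z(\mathcal{F})$ is closed in $\spc(\mathcal{L})$. Both assertions should follow formally from the fact that $F$ is exact, commutes with the tensor product up to natural isomorphism, and preserves the tensor unit.

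For well-definedness, I would check the four defining properties of a prime one by one for $F^{-1}(Q)$. First, that $F^{-1}(Q)$ is a full triangulated subcategory is immediate from $F$ being exact: it commutes with $\Sigma$ and sends exact triangles to exact triangles, so the preimage of a full triangulated subcategory is full triangulated. Second, that $F^{-1}(Q)$ is thick uses the additivity of $F$: if $F(P \oplus P') = F(P) \oplus F(P') \in Q$, then thickness of $Q$ forces $F(P), F(P') \in Q$. Third, $\otimes$-thickness follows from the natural isomorphism $F(P \otimes P') \cong F(P) \otimes F(P')$ together with $\otimes$-thickness of $Q$. Fourth, for the prime condition and for $F^{-1}(Q) \neq \mathcal{K}$: the latter uses that $F$ is monoidal, so $F(\mathbf{1}_\mathcal{K}) \cong \mathbf{1}_\mathcal{L}$, and one observes that $\mathbf{1}_\mathcal{L} \notin Q$ (otherwise $L \cong L \otimes \mathbf{1}_\mathcal{L} \in Q$ for every $L$, contradicting $Q \neq \mathcal{L}$), hence $\mathbf{1}_\mathcal{K} \notin F^{-1}(Q)$. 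The prime property itself is again the same syllogism: if $P \otimes P' \in F^{-1}(Q)$ then $F(P) \otimes F(P') \in Q$, so one of $F(P), F(P')$ lies in $Q$.

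For continuity, it suffices to show that the preimage of a generating closed set is closed. I would compute directly:
\[
\spc(F)^{-1}\bigl(Z(\mathcal{F})\bigr) = \{Q \in \spc(\mathcal{L}) \mid F^{-1}(Q) \cap \mathcal{F} = \emptyset \} = \{Q \in \spc(\mathcal{L}) \mid Q \cap F(\mathcal{F}) = \emptyset \} = Z\bigl(F(\mathcal{F})\bigr),
\]
where the middle equality is just unwinding the definition of preimage. Since $F(\mathcal{F}) \subset \mathcal{L}$ is a family of objects, the right-hand side is closed in $\spc(\mathcal{L})$ by the definition of the topology.

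No step here poses a genuine obstacle, since the argument is purely formal; the only thing one must be careful about is tracking what the assumption that $F$ is monoidal really supplies, namely a coherent isomorphism $F(P \otimes P') \cong F(P) \otimes F(P')$ and an isomorphism $F(\mathbf{1}_\mathcal{K}) \cong \mathbf{1}_\mathcal{L}$. The latter is essential to ensure $F^{-1}(Q) \neq \mathcal{K}$; without it, one could imagine a degenerate functor (say the zero functor, if that were allowed) for which $F^{-1}(Q)$ is all of $\mathcal{K}$ and so fails to be a prime.
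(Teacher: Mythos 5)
Your proof is correct and is essentially the standard verification: the paper itself gives no proof of this proposition (it is quoted from Balmer, \cite[Proposition 3.6]{spc}), and your argument --- checking that $F^{-1}(Q)$ inherits each defining property of a prime from $Q$ via the exactness, additivity, and monoidality of $F$, and then computing $\spc(F)^{-1}(Z(\mathcal{F})) = Z(F(\mathcal{F}))$ --- is exactly the argument in Balmer's original. Your closing remark about why $F(\mathbf{1}_{\mathcal K})\cong \mathbf{1}_{\mathcal L}$ is needed for properness is a good observation and is indeed the only non-mechanical point.
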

   \noindent
     \begin{defi}
  Let $({\mathcal K},\otimes)$ be a tensor triangulated category.  The {\em support} of an object $P\in {\mathcal K}$ is the closed set
  $$\begin{array}{rl} 
\supp(P):=Z(\{P\})=\{{\mathcal P}\in\spc({\mathcal K})  |  P\not\in{\mathcal P}\}.
\end{array}\,$$
\end{defi}
\noindent
One checks that the functor $\spc(-)$ satisfies $(\spc (F))^{-1}(\supp (P))= \supp (F(P))$ for all objects $P\in {\mathcal K}$.\\
 \\
%Using the terminology of the next definition, the pair $(\spc({\mathcal K}), \supp)$ is an example of a support data on ${\mathcal K}$.
   \begin{defi}
   A {\em support data} on a tensor triangulated category $({\mathcal K}, \otimes)$ is a pair $(X,\sigma)$, where $X$ is a topological space, and $\sigma$ is an assignment of a closed subset for each object of ${\mathcal K}$ satisfying the following properties:
   $$\begin{array}{rl} 
- &\mbox{$\sigma (0)=\emptyset$,  and $\sigma(1)=X$,}\\
- &\mbox{$\sigma (P\oplus Q)=\sigma (P)\cup \sigma(Q)$,}\\
- &\mbox{$\sigma (\Sigma P)=\sigma (P)$ for the autoequivalence $\Sigma$,}\\
- &\mbox{$\sigma (P)\subset \sigma (Q)\cup \sigma (R)$ for any exact triangle $P\to Q\to R \to \Sigma P$,}\\
- &\mbox{$\sigma (P\otimes Q)=\sigma (P)\cap \sigma (Q)$.}
\end{array}\,$$
A morphism $(X,\sigma)\to (Y,\tau)$ of support data on $({\mathcal K}, \otimes)$ is a continuous map $f:X\to Y$ such that $\sigma(P)=f^{-1}(\tau(P))$ for all objects $P\in {\mathcal K}$.
   \end{defi}
\noindent
The support data $(\spc({\mathcal K}), \supp)$ is the final support data on ${\mathcal K}$ in the following sense.
   \begin{prop} {\em (Universality, \cite[Theorem 3.2]{spc})}\\
   Let ${\mathcal K}$ be a tensor triangulated category. For any support data $(X,\sigma)$ on ${\mathcal K}$, there is a unique continuous map $f:X\to \spc({\mathcal K})$ such that $\sigma(P)=f^{-1}(\supp(P))$ for all $P\in {\mathcal K}$, namely
   $$\begin{array}{rl} 
X&\xrightarrow{f} \spc({\mathcal K})\\
x&\mapsto \{P\in {\mathcal K}  |  x\not\in \sigma(P)\}.
\end{array}\,$$%The support data $(\spc({\mathcal K}), \supp)$ is the {\em final} support data on ${\mathcal K}$.
   \end{prop}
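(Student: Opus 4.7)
The plan is to show that the map $f$ defined in the statement indeed lands in $\spc({\mathcal K})$, agrees with the given support data in the required way, and is continuous; uniqueness will then fall out of the defining identity $\sigma(P)=f^{-1}(\supp(P))$.

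First I would verify that, for each $x\in X$, the set $f(x)=\{P\in{\mathcal K}\mid x\notin\sigma(P)\}$ is a prime of ${\mathcal K}$. The fact that $f(x)$ is a full triangulated subcategory translates cleanly into three of the support-data axioms: $\sigma(0)=\emptyset$ yields $0\in f(x)$; $\sigma(\Sigma P)=\sigma(P)$ gives closure under the autoequivalence; and the inclusion $\sigma(P)\subset\sigma(Q)\cup\sigma(R)$ for exact triangles $P\to Q\to R\to\Sigma P$, combined with rotation of triangles, yields the usual two-out-of-three property. Thickness is the axiom $\sigma(P\oplus Q)=\sigma(P)\cup\sigma(Q)$, $\otimes$-thickness is one direction of $\sigma(P\otimes Q)=\sigma(P)\cap\sigma(Q)$, and $f(x)\neq{\mathcal K}$ uses $\sigma(1)=X$ to exclude $1$ from $f(x)$. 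The prime property itself uses the other direction of $\sigma(P\otimes Q)=\sigma(P)\cap\sigma(Q)$: if $x\notin\sigma(P)\cap\sigma(Q)$, then $x$ is outside one of them.

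Next I would verify $\sigma(P)=f^{-1}(\supp(P))$ by unwinding definitions: $x\in f^{-1}(\supp(P))$ iff $f(x)\in\supp(P)$ iff $P\notin f(x)$ iff $x\in\sigma(P)$. Continuity of $f$ is then immediate, since every closed set in $\spc({\mathcal K})$ has the form $Z({\mathcal F})=\bigcap_{P\in{\mathcal F}}\supp(P)$, whose $f$-preimage is $\bigcap_{P\in{\mathcal F}}\sigma(P)$, a closed subset of $X$.

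Finally I would prove uniqueness: if $g:X\to\spc({\mathcal K})$ is any continuous map satisfying $\sigma(P)=g^{-1}(\supp(P))$, then for each $x\in X$ and each $P\in{\mathcal K}$ we have $P\in g(x)\iff g(x)\notin\supp(P)\iff x\notin g^{-1}(\supp(P))=\sigma(P)\iff P\in f(x)$, so $g(x)=f(x)$. The main substantive step is the first one, checking the prime axioms; the bookkeeping is routine but requires every single support-data axiom, and the two inclusions inside $\sigma(P\otimes Q)=\sigma(P)\cap\sigma(Q)$ play noticeably different roles ($\otimes$-thickness versus the prime property), which is the one subtlety worth flagging.
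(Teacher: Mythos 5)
Your proposal is correct and is essentially the standard argument from Balmer's paper (the present article only cites \cite[Theorem 3.2]{spc} without proof): checking the prime axioms for $f(x)$ against the support-data axioms, unwinding $\sigma(P)=f^{-1}(\supp(P))$, deducing continuity from $Z({\mathcal F})=\bigcap_{P\in{\mathcal F}}\supp(P)$, and reading off uniqueness from the defining identity. All steps are sound, including the observation that uniqueness needs only the identity $\sigma(P)=g^{-1}(\supp(P))$ and not continuity.
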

\noindent
In other words, constructing the spectrum of ${\mathcal K}$ as above is the best way of associating a topological space with a system of supports to ${\mathcal K}$. This has the following important consequence.
   \begin{prop} {\em (Agreement, \cite[Corollary $5.6$]{spc})}\label{agreement} \\
   The underlying topological space of a noetherian scheme $X$ is homeomorphic to $\spc (D^{\perf}(X))$ via the map
   $$\begin{array}{rl} 
 x\mapsto \{P\in D^{\perf}(X)  |  x\not\in \supph(P)\}.
\end{array}\,$$
   \end{prop}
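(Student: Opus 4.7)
The plan is to apply the Universality proposition with $(X,\supph)$ as the support data. First I would check that $\supph$ satisfies the five axioms of a support data on $D^{\perf}(X)$: closedness of $\supph(P)$ follows from local finiteness of perfect complexes; the empty/total, additivity and shift-invariance axioms are immediate from the stalkwise description; the triangle inclusion follows from the long exact sequence of stalk homologies; and $\supph(P\otimes^{\mathbb{L}} Q)=\supph(P)\cap\supph(Q)$ uses local freeness of perfect complexes off their support. Universality then produces the unique continuous map $f:X\to \spc(D^{\perf}(X))$ given by the formula in the statement, and it remains to see that $f$ is bijective and that its inverse is continuous.

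For injectivity, given distinct $x,y\in X$, sobriety of $X$ together with the noetherian hypothesis produces a closed subscheme containing, say, $x$ but not $y$; a Koszul complex on local generators of its defining ideal sheaf yields a perfect complex $P$ with $x\in\supph(P)$ and $y\notin\supph(P)$, so that $f(x)\neq f(y)$.

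Surjectivity, which is the main step, relies on Thomason's classification recalled in Section~\ref{perfect}: every $\otimes$-thick subcategory $\mathcal{A}\subseteq D^{\perf}(X)$ equals $\{P:\supph(P)\subseteq Y\}$ for the specialization closed set $Y=\bigcup_{P\in\mathcal{A}}\supph(P)$. For a prime $\mathcal{P}$, the defining implication, combined with the noetherian fact that every closed subset of $X$ is the support of some perfect complex, translates into the following condition on its associated $Y$: for arbitrary closed subsets $C_{1},C_{2}\subseteq X$, the inclusion $C_{1}\cap C_{2}\subseteq Y$ forces $C_{1}\subseteq Y$ or $C_{2}\subseteq Y$. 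In a noetherian space the specialization closed subsets enjoying this irreducibility property are precisely the complements of the generalization sets $G_{x}:=\{y\in X : x\in\overline{\{y\}}\}$ of individual points, and a direct check matches the resulting prime with $f(x)$.

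For continuity of $f^{-1}$ I would verify that $f$ is closed: any closed $C\subseteq X$ equals $\supph(P_{C})$ for some perfect complex $P_{C}$ (a Koszul construction), and by the bijectivity already established $f(C)=\{\mathcal{P}\in\spc(D^{\perf}(X)) : P_{C}\notin\mathcal{P}\}=Z(\{P_{C}\})$, which is closed by definition. The principal obstacle is the surjectivity step, namely isolating the ``prime'' specialization closed subsets of a noetherian scheme and matching them with points of $X$; everything else is a routine translation through Thomason's classification and the Universality proposition.
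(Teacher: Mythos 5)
The paper does not prove this proposition at all: it is quoted verbatim from Balmer's article (Corollary 5.6 of \cite{spc}), so there is no internal proof to compare against. Your outline is a correct reconstruction of Balmer's own argument: verify that $(X,\supph)$ is a support data, invoke universality to get the canonical map $f$, and then use the Thomason classification of $\otimes$-thick subcategories by specialization closed subsets to translate primality into the statement that the complement of the associated specialization closed set is the set of generizations of a single point. The only step you flag as the ``principal obstacle'' does go through: by noetherian induction (DCC on closed subsets) the finite intersections $\overline{\{z_1\}}\cap\dots\cap\overline{\{z_n\}}$ with $z_i\notin Y$ stabilize and are never contained in $Y$, which produces the generic specialization $x$ with $X\setminus Y=G_x$; together with the standard fact that every closed subset of a noetherian scheme is $\supph$ of a perfect complex (Thomason), this yields surjectivity, and your closedness argument for $f$ is exactly right. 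One cosmetic remark: for injectivity you do not need sobriety, only the $T_0$ separation of schemes, since for $x\neq y$ one of $\overline{\{x\}}$, $\overline{\{y\}}$ already separates them.
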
 
  
%\begin{rema}
%(omformulera eller ta bort)
%As was stated above, the derived category of perfect complexes is a complete invariant for noetherian schemes, so there is also a reconstruction procedure for the structure sheaf ${\mathcal O}_X$. More on this in a later section.
%\end{rema}\\ 

\newpage
\section{The stable homotopy category}
\noindent
Another prominent example of a tensor triangulated category is the stable homotopy category. %We shall be concerned with its full subcategory ${\mathcal S}_{0}$ of compact objects, and refer to it as the {\em finite stable homotopy category}. The objects of ${\mathcal S}_0$ are ordered pairs $(X,n)$, where $X$ is a finite CW-complex, and $n$ is an integer, and morphisms between {\em finite spectra}
Objects of this category are {\em spectra}, i.e. sequences $P=\{P_i\}_{i\in {\mathbb Z}}$ of CW-complexes together with connecting maps $\Sigma P_i\to P_{i+1}$, such that the adjoints $P_i\to \Omega P_{i+1}$ are weak homotopy equivalences. The morphisms are stable homotopy classes of maps.
%We refer the reader to [add reference] for a detailed exposition of the stable homotopy theory. 
Shift of indices gives an autoequivalence, and mapping sequences induce the exact triangles. A compatible monoidal structure is inherited from the smash product of pointed spaces (cf. \cite[Example $1.2.3$]{asht}).  We shall be concerned with the full subcategory ${\mathcal S}_{0}$ of compact objects in the stable homotopy category. We refer to ${\mathcal S}_{0}$ as the {\em finite stable homotopy category}, and to its objects as {\em finite spectra}. \\
 \\
%Structural similarities between ${\mathcal S}_{0}$ and e.g. $D^{\perf}(\SpecZ)$ have inspired the development of {\em axiomatic stable homotopy theory} (cf. \cite{asht}). In this framework, the [finite stable homotopy category is related with any category of perfect complexes] on a scheme by a so called {\em geometric morphism} (cf. \cite{asht} or Remark \ref{f1}). %In particular, there is a monoidal exact functor
%$$\begin{array}{rl} 
%\sing: {\mathcal S}_{0}\to D^{\perf}(\SpecZ),
%\end{array}\,$$
%which is given by sending a spectrum to the homology class of its singular chain complex (cf. \cite{salch}). %In section \ref{themap}, we describe the induced map
%$$\begin{array}{rl} 
%\spec {\mathbb Z}\simeq \spc (D^{\perf}(\spec {\mathbb Z})) \xrightarrow{\spc(\sing)} \spc ({\mathcal S}_{0})
%\end{array}\,.$$
    \subsection{Classification of thick subcategories} \label{classification}
For a fixed prime number $p$, one can consider the {\em $p$-local stable homotopy category}. We shall consider its full subcategory ${\mathcal S}_{0}^p$ of compact objects. The objects of this category are finite spectra whose homotopy groups are $p$-local, i.e. finite spectra $P$ such that $\pi_{\ast}(P)\simeq \pi_{\ast}(P)\otimes {\mathbb Z}_{(p)}$. \\%In other words, a finite CW-complex whose homotopy groups are $p$-local.\\
\\
For a fixed prime number $p$, there is a spectrum $K(n)$ for each integer $n\geq 1$, called the {\em $n$th Morava K-theory (at $p$)}. It has the following properties:
$$\begin{array}{rl} 
(M1) &\mbox{The coefficient ring $K(n)_{\ast}$ is isomorphic to ${\mathbb F}_{p}[v_n,v_n^{-1}]$, with $|v_n|=2(p^n -1)$.}\\
(M2) &\mbox{K\"unneth isomorphism: $K(n)_{\ast}(P\wedge Q)\simeq K(n)_{\ast}(P)\otimes_{K(n)_{\ast}} K(n)_{\ast}(Q)$.}\\
(M3) &\mbox{$K(n+1)_{\ast}(P)=0\Rightarrow K(n)_{\ast}(P)=0$ for all $n$.}\\
(M4) &\mbox{If $P\neq 0$ and finite, there exists an $N$ such that $K(n)_{\ast}(P)\neq 0$ for all $n\geq N$.}
\end{array}\,$$
In addition, we define 
$$\begin{array}{rl} 
K(0)=\eilenberg {\mathbb Q},
\end{array}\,$$
to be the rational Eilenberg-Mac Lane spectrum, and
$$\begin{array}{rl} 
K(\infty)=\eilenberg {\mathbb F}_p,
\end{array}\,$$
to be the mod $p$ Eilenberg-Mac Lane spectrum.\\
 \\
%(i) is used in showing 1-acyclic is p-torsion
%(ii) is used in showing thick implies prime
%(iii) is used to say that there is a nested sequence
%(iv) is used to say that there are strict inclusions in the nested sequence
The Morava K-theories determine thick subcategories of ${\mathcal S}_{0}^{p}$. Namely, for any integer $n\geq 1$, the full subcategory 
$$\begin{array}{rl} 
{\mathcal S}_n^{p}:=\{P\in {\mathcal S}_{0}^{p}  |  K(n-1)_{\ast}(P)=0\},
\end{array}\,$$
of $(n-1)$-acyclic $p$-local spectra is thick. %Such a subcategory is also {\em indecomposable} in the sense that any decomposition ${\mathcal S}_n^{p}={\mathcal C}\coprod {\mathcal D}$ implies that either ${\mathcal C}=0$ or ${\mathcal D}=0$ (cf. \cite{chebolu}).\\
 %\\
 The fact that the converse holds is a deep theorem in topology. 
\begin{theor} {\em (Hopkins-Smith, \cite[Theorem $7$]{nilpotence} )}\label{nilpotence}\\
A subcategory ${\mathcal A}$ of ${\mathcal S}_{0}^{p}$ is thick if and only if ${\mathcal A}={\mathcal S}_n^{p}$ for some $n\geq 1$. Further, these subcategories form  a nested filtration of ${\mathcal S}_{0}^{p}$:
$$\begin{array}{rl} 
{\mathcal S}_{\infty}\subset\cdots \subset {\mathcal S}_{n}^{p}\subset {\mathcal S}_{n-1}^{p}\subset\cdots\subset {\mathcal S}_{1}^{p}\subset {\mathcal S}_{0}^{p},
\end{array}\,$$
where all inclusions are proper.
\end{theor}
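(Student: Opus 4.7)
The plan is to prove the two directions separately, taking as black boxes the two deep theorems of Hopkins-Smith whose computational content would otherwise dominate the argument. For the easy direction, each ${\mathcal S}_n^p$ is thick because $K(n-1)_\ast(-)$ is a homology theory: its vanishing is preserved under retracts (by additivity on wedge sums) and under any two of the three terms of an exact triangle (via the long exact sequence on $K(n-1)_\ast$). To see the inclusions in the filtration are proper, one exhibits for each $n \geq 1$ a \emph{type-$n$} finite $p$-local spectrum $V_n$, i.e., one with $K(m)_\ast(V_n) = 0$ for $m < n$ but $K(n)_\ast(V_n) \neq 0$; such $V_n$ are built inductively as iterated cofibers of $v_m$-self maps (in the manner of Smith and Toda), and their existence is itself a byproduct of the periodicity theorem invoked below.

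For the converse, let ${\mathcal A} \subset {\mathcal S}_0^p$ be a nonzero thick subcategory. Define the \emph{type} of a nonzero $P$ by $\text{type}(P) := \min\{m \geq 0 \mid K(m)_\ast(P) \neq 0\}$, which is well-defined by (M4), and set $n := 1 + \min\{\text{type}(P) \mid 0 \neq P \in {\mathcal A}\}$. By the minimality of $n$, every $P \in {\mathcal A}$ satisfies $K(n-1)_\ast(P) = 0$, so ${\mathcal A} \subset {\mathcal S}_n^p$. The content of the theorem is the reverse inclusion: given a type-$n$ witness $P \in {\mathcal A}$, every $Q \in {\mathcal S}_n^p$ should lie in the thick subcategory $\langle P \rangle$ generated by $P$.

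To establish this, one uses the \emph{periodicity theorem}, providing compatible $v_n$-self maps $v_P : \Sigma^{d} P \to P$ and $v_Q : \Sigma^{d} Q \to Q$ inducing $K(n)_\ast$-isomorphisms and acting by zero on every other $K(m)_\ast$, together with the \emph{nilpotence theorem}, which says that a self-map of a finite spectrum killed by all $K(m)_\ast$ must be nilpotent. Applying the nilpotence theorem to suitable maps built out of $P \wedge DP \wedge Q$ and iterating the telescopes of $v_P$ and $v_Q$, one shows that $Q$ is a retract of an object assembled from $P$ through finitely many cofiber sequences, placing $Q$ in $\langle P \rangle \subset {\mathcal A}$. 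The main obstacle is precisely the periodicity and nilpotence theorems themselves, whose own proofs rest on intricate computations in $MU$-homology and on vanishing lines in the Adams-Novikov spectral sequence; once these are accepted as black boxes, the classification unfolds as above, and the properness of the filtration also follows by applying it to the thick subcategories generated by the spectra $V_n$.
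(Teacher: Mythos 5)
The paper does not prove this statement: it is imported verbatim from Hopkins--Smith with a citation (the text explicitly calls it ``a deep theorem in topology''), so your sketch can only be measured against the literature. Your overall strategy is the standard one --- reduce everything to the nilpotence and periodicity theorems taken as black boxes --- and the easy direction (thickness of each ${\mathcal S}_n^{p}$ via the long exact sequence and additivity of $K(n-1)_{\ast}$, properness via the existence of type-$n$ complexes) is fine in outline.

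Two problems. First, an off-by-one error that makes a stated claim false: with $n:=1+\min\{\mathrm{type}(P)\}$, the minimizing witness $P_0$ has $K(n-1)_{\ast}(P_0)=K(t_0)_{\ast}(P_0)\neq 0$ by the very definition of type, so ${\mathcal A}\subset{\mathcal S}_n^{p}$ fails and there is no ``type-$n$ witness'' in ${\mathcal A}$; under the paper's convention ${\mathcal S}_n^{p}=\{P\mid K(n-1)_{\ast}(P)=0\}=\{\mathrm{type}\geq n\}$ you want $n:=\min\{\mathrm{type}(P)\}$. Second, and more substantively, the one step that is \emph{not} one of the two black boxes --- that every $Q$ of type $\geq n$ lies in the thick subcategory $\langle P\rangle$ generated by a single type-$n$ spectrum $P\in{\mathcal A}$ --- is dispatched in one sentence about ``iterating the telescopes of $v_P$ and $v_Q$,'' which is not how the argument goes and is not a proof. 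The actual argument (Hopkins--Smith, or Ravenel's exposition) takes the cofiber $C$ of the Spanier--Whitehead evaluation $DP\wedge P\to S^0$, notes that each cofiber in the tower $Q\to C\wedge Q\to C^{\wedge 2}\wedge Q\to\cdots$ is of the form $DP\wedge P\wedge(-)$ and hence lies in $\langle P\rangle$, and applies the smash-nilpotence form of the nilpotence theorem --- the type hypotheses on $P$ and $Q$ enter precisely in checking that $K(m)_{\ast}$ of $Q\to C\wedge Q$ vanishes for every $m$ --- to conclude that $Q\to C^{\wedge k}\wedge Q$ is null for large $k$, exhibiting $Q$ as a retract of an object finitely built from $P$. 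The $v_n$-self maps play no role in this step; the periodicity theorem is needed only where you correctly invoke it, namely to construct the type-$n$ complexes $V_n$ and hence establish properness of the inclusions. As written, the central reduction is asserted rather than proved.
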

\newpage
\noindent
 Using the thick subcategory classification for $p$-local finite spectra, S. K. Chebolu derives the following classification of thick subcategories of the finite stable homotopy category ${\mathcal S}_0$.
 \begin{theor} {\em (\cite[Theorem $7.3$]{krull})}\label{chebolus}\\
 A subcategory ${\mathcal A}$ of ${\mathcal S}_{0}$ is thick if and only if ${\mathcal A}={\mathcal S}_{0}$, or
 $$\begin{array}{rl} 
{\mathcal A}=\coprod_{p\in S} {\mathcal S}_{n_p}^{p},
\end{array}\,$$
for some set $S$ of prime numbers, and some integers $n_p\geq 1$.\\
 \\
 \end{theor}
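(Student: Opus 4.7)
Plan: I would reduce the classification to the $p$-local case (Theorem \ref{nilpotence}) via the $p$-localization functors $L_p := (-)_{(p)} \colon {\mathcal S}_0 \to {\mathcal S}_0^p$, each of which is exact and monoidal. Given a thick subcategory ${\mathcal A} \subseteq {\mathcal S}_0$, the thick closure ${\mathcal A}^p$ in ${\mathcal S}_0^p$ of $\{L_p(P) : P \in {\mathcal A}\}$ is itself thick, hence by Theorem \ref{nilpotence} equals ${\mathcal S}_{n_p}^p$ for a unique $n_p$ (using the conventions that $n_p = 0$ means the whole $p$-local category and $n_p = \infty$ the zero subcategory). The set of primes appearing in the statement will be $S := \{p : n_p \geq 1\}$.

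With this data extracted, the forward direction of the classification reduces to the identity ${\mathcal A} = \bigcap_{p} L_p^{-1}({\mathcal S}_{n_p}^p)$, which would then be identified with $\coprod_{p \in S} {\mathcal S}_{n_p}^p$ in the notation of the theorem. The inclusion $\subseteq$ is immediate from the definition of ${\mathcal A}^p$. For the crucial reverse inclusion, I would exploit that a finite spectrum $P$ has finitely generated homotopy groups, so $\pi_{*}(P)$ carries nontrivial $p$-torsion at only finitely many primes; combined with the rational information, $P$ is determined up to isomorphism by finitely many of its $p$-localizations. Using an arithmetic-fracture argument, one reconstructs $P$ from witnesses to $L_p(P) \in {\mathcal A}^p$ via a finite sequence of cofiber sequences and Moore-spectrum smash products, all of which preserve membership in the thick subcategory ${\mathcal A}$.

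The converse direction, that every such $\coprod_{p \in S} {\mathcal S}_{n_p}^p$ is thick in ${\mathcal S}_0$, is routine: each preimage $L_p^{-1}({\mathcal S}_{n_p}^p)$ is thick because $L_p$ is exact and preserves retracts, and arbitrary intersections of thick subcategories remain thick. I expect the main obstacle to be the reverse inclusion in the forward direction, namely realizing a finite spectrum as an actual object of ${\mathcal A}$ from a compatible family of its $p$-local projections. This is where the finiteness of $\pi_{*}(P)$ and the arithmetic fracture of integral spectra into $p$-local data must be carefully combined with the closure properties of ${\mathcal A}$ under triangles and retracts; without this step, one only has the abstract intersection of preimages and not the equality with ${\mathcal A}$ itself.
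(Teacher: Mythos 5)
This theorem is not proved in the paper: it is quoted verbatim from Chebolu (\cite[Theorem 7.3]{krull}), so there is no internal proof to compare against. Judging your proposal on its own merits: the skeleton (reduce to Hopkins--Smith at each prime via the localizations $L_p$, then reassemble) is the right one, and it does work for the ``torsion'' half of the classification. If every object of ${\mathcal A}$ is rationally acyclic, then every $P\in{\mathcal A}$ has finite homotopy groups and splits as a \emph{finite wedge} of its $p$-primary summands $P_{(p)}=L_p(P)$, each of which is a retract of $P$ and hence already lies in ${\mathcal A}$; from there Hopkins--Smith at each prime gives ${\mathcal A}=\coprod_{p\in S}{\mathcal S}_{n_p}^p$. (Even here you should check that the thick subcategory of ${\mathcal S}_0^p$ generated by a finite $p$-torsion spectrum, intersected with finite spectra, is generated inside ${\mathcal S}_0$ -- a point your ``thick closure ${\mathcal A}^p$'' elides.)

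The genuine gap is the complementary case, which is the actual content of the theorem beyond Hopkins--Smith: if ${\mathcal A}$ contains some $P$ with $H{\mathbb Q}_*P\neq 0$ (your $n_p=0$ situation), you must show ${\mathcal A}={\mathcal S}_0$, i.e.\ that such a $P$ generates the whole category -- in particular that the sphere $S$ lies in $\mathrm{thick}(P)$. Your arithmetic-fracture step does not deliver this. For a non-torsion finite spectrum the localizations $L_p(P)$ are \emph{not} finite spectra, are not retracts of $P$, and the fracture square reassembling $P$ from $P_{\mathbb Q}$ and the $P_{(p)}$ involves a rationalization and an infinite product, none of which can be realized by finitely many cofiber sequences and retracts inside ${\mathcal S}_0$, let alone inside ${\mathcal A}$. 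The finite generation of $\pi_*(P)$ controls only the torsion, which is nontrivial at finitely many primes; the rational part is present at \emph{every} prime and is exactly what obstructs the splitting. So the implication ``$L_p(P)\in{\mathcal A}^p$ for all $p$ $\Rightarrow$ $P\in{\mathcal A}$'' is unproved precisely where it matters. Closing this requires a separate global argument (Chebolu's; e.g.\ exploiting Spanier--Whitehead duality and the unit $S\to P\wedge DP$ of a rationally nontrivial finite spectrum), not localization alone.
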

\section{The spectrum of ${\mathcal S}_{0}$}
%du f�ll is�r inf�r g�sterna
%jag byggde upp dig bit f�r bit igen
%(J. Berg)
\noindent
In this section, we apply the functor $\spc(-)$ to the finite stable homotopy category ${\mathcal S}_{0}$, and to its $p$-local versions ${\mathcal S}^{p}_{0}$. In addition to the classification results of the previous section, we will need the classification of primes carried out below in Theorem \ref{stolt} in order to describe the resulting topological spaces.\\
\begin{lem} \label{key} If $p$ and $q$ are distinct prime numbers, then for all integers $i\geq 1$, and all integers $j\geq 0$
$$\begin{array}{rl} 
P\in {\mathcal S}_{i}^{p}\cap {\mathcal S}_{j}^{q} \Rightarrow P\sim\ast.
\end{array}\,$$
\end{lem}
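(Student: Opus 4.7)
The plan is to extract from the hypotheses that $\pi_{\ast}(P)$ is simultaneously torsion-free and torsion, hence vanishes in every degree, so that $P\sim\ast$.

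For torsion-freeness, I would use only the fact that $P$ is both $p$-local and $q$-local. Since $P\in {\mathcal S}_{0}^{p}$, the identification $\pi_{\ast}(P)\cong \pi_{\ast}(P)\otimes {\mathbb Z}_{(p)}$ makes multiplication by every prime $\ell\neq p$ invertible on $\pi_{\ast}(P)$; in particular $\pi_{\ast}(P)$ has no $\ell$-torsion for $\ell\neq p$. Symmetrically, $q$-locality rules out $\ell$-torsion for every $\ell\neq q$. Since $p\neq q$, these two constraints jointly exclude $\ell$-torsion for every prime $\ell$, so $\pi_{\ast}(P)$ is torsion-free.

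For the torsion conclusion, I would use the Morava $K$-theory hypothesis at the prime $p$. Because $i\geq 1$, the assumption $P\in {\mathcal S}_{i}^{p}$ gives $K(i-1)_{\ast}(P)=0$. Iterating property $(M3)$ downward from $n=i-1$ to $n=0$ yields $K(0)_{\ast}(P)=0$, and the definition $K(0)=\eilenberg{\mathbb Q}$ together with the universal coefficient isomorphism $(\eilenberg{\mathbb Q})_{\ast}(P)\cong \pi_{\ast}(P)\otimes {\mathbb Q}$ forces $\pi_{\ast}(P)$ to be torsion.

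Combining the two conclusions gives $\pi_{\ast}(P)=0$ in every degree, whence $P\sim\ast$. The asymmetry in the hypotheses — $i\geq 1$ but only $j\geq 0$ — is exactly what is needed so that at least one of the two primes contributes a nontrivial $K$-theory constraint; the $q$-side's only role is to supply the second locality. The substantive observation, and the only step that is not completely mechanical, is that joint $p$- and $q$-locality at distinct primes already kills all torsion in the homotopy groups; once that is seen, the axioms $(M3)$ and the definition of $K(0)$ finish the argument immediately.
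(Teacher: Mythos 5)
Your proof is correct and follows essentially the same route as the paper's: joint $p$- and $q$-locality at two distinct primes kills all torsion in $\pi_{\ast}(P)$ (the paper phrases this as ``$P$ is rational''), while $i\geq 1$ forces $K(0)_{\ast}(P)=\pi_{\ast}(P)\otimes\mathbb{Q}=0$, via the nesting of the ${\mathcal S}_{n}^{p}$ in Theorem \ref{nilpotence} --- which is exactly your downward iteration of $(M3)$. The two facts together give $\pi_{\ast}(P)=0$, hence $P\sim\ast$, just as in the paper.
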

\begin{proof}
If a spectrum $P$ is both $p$-local and $q$-local, then $P$ is $r$-local for any prime number $r$. In other words, $P$ is rational. By the inclusions of Theorem \ref{nilpotence}, $P\in {\mathcal S}_{1}^{p}$, meaning that $P$ has vanishing rational homology. Hence $P$ is contractible. 
\end{proof}
%Note that part one does not depend on classification.
\begin{theor} {\em (Classification of primes)}\label{stolt}\\%{\em (Classification of primes of ${\mathcal S}_0$)}\\
$(i)$ Every full triangulated subcategory ${\mathcal A}\subset{\mathcal S}_{0}$ satisfies
 $$\begin{array}{rl} 
Q\in {\mathcal A} \mbox{  and  } P\in {\mathcal S}_0 \Rightarrow  P\wedge Q\in   {\mathcal A}.
\end{array}\,$$
In particular, every thick subcategory of ${\mathcal S}_{0}$ is $\otimes$-thick.\\
$(ii)$ Every thick subcategory of ${\mathcal S}^{p}_{0}$ is $\otimes$-thick.\\
$(iii)$ The primes of ${\mathcal S}^{p}_{0}$ are precisely the subcategories ${\mathcal S}_{n}^{p}$, for $n\geq 1$.\\
$(iv)$ The subcategory
 $$\begin{array}{rl} 
{\mathcal M}=\coprod_{p} {\mathcal S}_{1}^{p},
\end{array}\,$$
is the only prime of ${\mathcal S}_{0}$.
%The primes of ${\mathcal S}_{0}$ are precisely ${\mathcal S}_{0}$, and the subcategories ${\mathcal S}_{n}^{p}$.
\end{theor}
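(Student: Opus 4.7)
My plan is to handle the four statements in order, relying throughout on the fact that the (possibly $p$-local) sphere is a triangulated generator.

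For (i), any $P\in{\mathcal S}_0$ admits a finite filtration by mapping cones whose successive cofibers are shifts of $S^0$. Smashing this filtration with a fixed $Q\in{\mathcal A}$ produces a filtration of $P\wedge Q$ whose cofibers are shifts of $Q$, all lying in ${\mathcal A}$ since triangulated subcategories are $\Sigma^{\pm 1}$-stable. Closure of ${\mathcal A}$ under the two-out-of-three property for exact triangles then forces $P\wedge Q\in{\mathcal A}$. The proof of (ii) is formally identical, with $S^0_{(p)}$ playing the role of the unit in ${\mathcal S}^p_0$.

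For (iii), part (ii) says that every thick subcategory of ${\mathcal S}^p_0$ is automatically $\otimes$-thick, so Theorem \ref{nilpotence} reduces the problem to deciding which ${\mathcal S}^p_n$ are prime. Properness rules out $n=0$; for $n\geq 1$, the K\"unneth formula (M2) rewrites $K(n-1)_*(P\wedge Q)$ as the tensor product $K(n-1)_*(P)\otimes_{K(n-1)_*}K(n-1)_*(Q)$, and by (M1) this coefficient ring is a graded field, forcing vanishing of a factor. This is exactly the prime condition for ${\mathcal S}^p_n$.

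For (iv), the same K\"unneth argument applied to $K(0)=H{\mathbb Q}$ over the field ${\mathbb Q}$ shows ${\mathcal M}$ is a prime --- the sphere $S^0$ witnesses properness and $\otimes$-thickness follows from (i). For uniqueness, given any prime ${\mathcal A}$, Chebolu's classification (Theorem \ref{chebolus}) lets me write ${\mathcal A}=\coprod_{p\in S}{\mathcal S}^p_{n_p}$, which is automatically contained in ${\mathcal M}$. The key input is Lemma \ref{key}: for distinct primes $p\neq q$ and arbitrary $P\in{\mathcal S}^p_1$, $Q\in{\mathcal S}^q_1$, the smash $P\wedge Q$ is both $p$-local and $q$-local and rationally acyclic, hence contractible by the lemma, so $P\wedge Q=0\in{\mathcal A}$ and the prime property places $P$ or $Q$ in ${\mathcal A}$; ranging over all prime pairs shows at most one $p^*$ can satisfy ${\mathcal S}^{p^*}_1\not\subset{\mathcal A}$. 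The main obstacle is then to rule out such an exceptional $p^*$ by exhibiting, in that case, a concrete pair $(P,Q)$ outside ${\mathcal A}$ whose smash lies in ${\mathcal A}$, contradicting primality; this would force ${\mathcal S}^p_1\subset{\mathcal A}$ for every $p$ and hence ${\mathcal M}\subset{\mathcal A}$, which combined with ${\mathcal A}\subset{\mathcal M}$ gives ${\mathcal A}={\mathcal M}$.
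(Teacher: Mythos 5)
Your arguments for $(i)$--$(iii)$ coincide with the paper's (cell filtration plus unitality of the sphere for $(i)$ and $(ii)$; Hopkins--Smith together with the K\"unneth isomorphism over the graded field $K(n-1)_*$ for $(iii)$), and your proof that ${\mathcal M}$ \emph{is} a prime, via the K\"unneth formula for $K(0)=\eilenberg{\mathbb Q}$, is a legitimate alternative to the paper's, which instead uses maximality: if $P\wedge Q\in{\mathcal M}$ and $P\notin{\mathcal M}$, then the thick subcategory $\{R\,|\,R\wedge Q\in{\mathcal M}\}$ properly contains ${\mathcal M}$, hence equals ${\mathcal S}_0$ by Theorem \ref{chebolus}, and taking $R=S$ gives $Q\in{\mathcal M}$. (To make your version airtight you should record that ${\mathcal M}$ is exactly the class of rationally acyclic finite spectra.)

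The uniqueness half of $(iv)$, however, contains a genuine gap which you yourself flag: after showing that at most one prime number $p^*$ can satisfy ${\mathcal S}^{p^*}_1\not\subset{\mathcal A}$, you do not eliminate that exceptional case, and no pair of \emph{torsion} spectra can do it. Indeed, if ${\mathcal A}=\coprod_{q\neq p^*}{\mathcal S}^{q}_{1}\,\amalg\,{\mathcal S}^{p^*}_{n}$ with $n\geq 2$, then any $P,Q\in{\mathcal M}\setminus{\mathcal A}$ must each have a $p^*$-primary component of type $<n$, and by the K\"unneth isomorphism the type of $P\wedge Q$ at $p^*$ is the maximum of the two types, hence still $<n$; so $P\wedge Q\notin{\mathcal A}$ and primality is never violated by such pairs. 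The missing idea is the one the paper uses: take $P\in{\mathcal S}^{p^*}_{1}\setminus{\mathcal A}$ and take $Q$ to be a $q$-local spectrum with \emph{non-vanishing} rational homology, i.e. $Q\in{\mathcal S}^{q}_{0}\setminus{\mathcal S}^{q}_{1}$ for some $q\neq p^*$. Then $Q\notin{\mathcal A}$ for the cheap reason that ${\mathcal A}\subset{\mathcal M}$ consists of rationally acyclic spectra, while $P\wedge Q\sim\ast\in{\mathcal A}$ by Lemma \ref{key}, whose hypothesis is stated precisely so as to allow the exponent $j=0$. This contradicts primality and closes the exceptional case; without some such choice of a non-torsion, non-$p^*$-local $Q$, your argument does not terminate.
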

\begin{proof}
$(i)$ The triangulated category ${\mathcal S}_{0}$ is generated by the sphere spectrum $S$, so any object $P\in {\mathcal S}_{0}$ can be obtained from $S$ by applying successively the autoequivalence $\Sigma$ and taking cofibers of maps. Since the smash product is compatible with these operations, showing that $P\wedge Q\in {\mathcal A}$, for $Q$ in a full triangulated subcategory ${\mathcal A}$, eventually reduces to showing that $S\wedge Q\in {\mathcal A}$. But $S$ is a unit for the smash product, so this is obvious.\\
$(ii)$ follows from $(i)$.\\
$(iii)$ follows from Hopkins' and Smith's theorem (Theorem \ref{nilpotence}) and $(ii)$, together with the K\"unneth isomorphism for Morava K-theory. \\
$(iv)$ To see that ${\mathcal M}$ is a prime, assume that $P\wedge Q\in {\mathcal M}$ and $P\not\in {\mathcal M}$. The subcategory
 $$\begin{array}{rl} 
{\mathcal A}:=\{R\in {\mathcal S}_0  |  R\wedge Q\in {\mathcal M} \}
\end{array}\,$$
is thick, and contains ${\mathcal M}$ properly, since $P\in {\mathcal A}\setminus {\mathcal M}$. By Chebolu's theorem (Theorem \ref{chebolus}), ${\mathcal M}$ is maximal for inclusion of proper thick subcategories, so ${\mathcal A}$ must equal ${\mathcal S}_0$. In particular, the unit $S$ for the smash product belongs to ${\mathcal A}$, and hence $Q\in {\mathcal M}$. 
\newpage
\noindent
Now we show that no other $\otimes$-thick subcategories are primes. Let ${\mathcal A}$ be a $\otimes$-thick proper subcategory of ${\mathcal S}_{0}$. Since we are assuming that ${\mathcal A}\neq {\mathcal M}$, we can use Theorem \ref{chebolus} to conclude that there exists a prime number $p$ and a spectrum $P\in {\mathcal S}_1^{p}$ such that $P\not\in {\mathcal A}$. Let $q$ be any prime number different from $p$, and choose a spectrum $Q\in {\mathcal S}_{0}^{q}\setminus {\mathcal S}_{1}^{q}$. (This is possible by Theorem \ref{nilpotence}.) Then $Q\not\in {\mathcal A}$ by Theorem \ref{chebolus}. However $P\wedge Q\in {\mathcal S}_1^{p}\cap {\mathcal S}_0^{q}$ by $(i)$, so by Lemma \ref{key}, $P\wedge Q\sim\ast$. In particular $P\wedge Q\in {\mathcal A}$, and so ${\mathcal A}$ is not a prime.
\end{proof}
\begin{corol} \label{isapt} As sets
$$\begin{array}{rl} 
\spc ({\mathcal S}_{0})&\simeq \ast, \mbox{  and}\\
\spc ({\mathcal S}^{p}_{0})&\simeq {\mathbb N}.
\end{array}\,$$
\end{corol}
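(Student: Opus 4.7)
The plan is to unwind Definition \ref{thespectrum} and then directly invoke Theorem \ref{stolt}. Since $\spc({\mathcal K})$ is by definition the set of primes of ${\mathcal K}$ equipped with a certain topology, and since the corollary only asserts a bijection of sets, I may ignore the topology entirely and just enumerate primes.

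For the first assertion, I would read off part $(iv)$ of Theorem \ref{stolt}: the subcategory ${\mathcal M} = \coprod_p {\mathcal S}_1^p$ is the only prime of ${\mathcal S}_0$. Hence $\spc({\mathcal S}_0) = \{{\mathcal M}\}$ is a one-element set, which is exactly what $\simeq \ast$ means at the level of sets.

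For the second assertion, part $(iii)$ of Theorem \ref{stolt} identifies the primes of ${\mathcal S}_0^p$ as precisely the subcategories ${\mathcal S}_n^p$ with $n \geq 1$, and these are pairwise distinct because Theorem \ref{nilpotence} guarantees that all inclusions in the nested filtration are strict. The assignment $n \mapsto {\mathcal S}_n^p$ is therefore a bijection of the set of positive integers onto $\spc({\mathcal S}_0^p)$, and composing with the obvious bijection between positive integers and ${\mathbb N}$ yields $\spc({\mathcal S}_0^p) \simeq {\mathbb N}$ as sets.

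I anticipate no real obstacle: the corollary is essentially a repackaging of Theorem \ref{stolt} once one remembers that the underlying set of the spectrum is, by construction, the set of primes. The only minor point worth stating explicitly is the distinctness of the ${\mathcal S}_n^p$, which is why I would invoke Theorem \ref{nilpotence} rather than merely Theorem \ref{stolt}$(iii)$.
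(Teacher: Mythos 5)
Your proposal is correct and matches the paper's (implicit) argument: the corollary is stated without proof precisely because it is an immediate reading-off of Theorem \ref{stolt}(iii) and (iv) from the definition of $\spc(-)$ as the set of primes. Your extra remark about the strictness of the inclusions in Theorem \ref{nilpotence} guaranteeing distinctness of the ${\mathcal S}_n^p$ is a reasonable small addition, not a deviation.
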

 \noindent
 While the topological space $\spc ({\mathcal S}_{0})$ is trivial, the topological space, $\spc ({\mathcal S}^{p}_{0})$ is more interesting. Definition \ref{thespectrum} implies that any closed subset $Z$ of $\spc({\mathcal S}^{p}_0)$ has the property
 $$\begin{array}{rl} 
S\in Z \mbox{ and } T\subset S \Rightarrow T\in Z.
\end{array}\,$$
The descending sequence
$$\begin{array}{rl} 
{\mathcal S}_{\infty}\subset\cdots \subset {\mathcal S}_{n+1}^{p}\subset {\mathcal S}_{n}^{p}\subset {\mathcal S}_{n-1}^{p}\subset\cdots\subset {\mathcal S}_{2}^{p}\subset {\mathcal S}_{1}^{p}
\end{array}\,,$$
of primes, implies that ${\mathcal S}_{\infty}=Z({\mathcal S_{0}\setminus {\mathcal S}_{\infty}})$ is the only closed point, and that
$$\begin{array}{rl} 
{\mathcal S}^{p}_{1}=\spc ({\mathcal S}^{p}_{0})\setminus Z({\mathcal S}^{p}_{1}\setminus {\mathcal S}^{p}_{2}),
\end{array}\,$$
is the only open point.\\
 \\
\begin{rema} \label{f1}
The category ${\mathcal S}_{0}$ has been suggested as a candidate for a category of perfect complexes over $\Specfun$. One of the motivations for this is that the singular chain complex functor 
$$\begin{array}{rl} 
\sing: {\mathcal S}^{+}\rightarrow D^{+}({\mathcal O}_{\SpecZ}-mod)
\end{array}\,$$
gives a "lift" of the stable homotopy category of bounded below spectra ${\mathcal S}^{+}$ to the bounded below derived category of ${\mathcal O}_{\SpecZ}$-modules, sending finite spectra to perfect complexes. Modulo the Dold-Kan correspondence $\sing$ is the free abelian group functor ${\mathbb Z}[-]$, which should be used for lifting an ${\mathbb F}_1$-scheme to an ordinary (${\mathbb Z}$-)scheme (cf. \cite{deitmar}). Inspired by this observation, A. Salch defines the morphism 
$$\begin{array}{rl} 
f:\SpecZ\to \Specfun
\end{array}\,$$
to be the adjunction $\sing\dashv \eilenberg$, where $\eilenberg$ is the functor sending a complex to its Eilenberg-MacLane spectrum (cf. \cite{salch}). Using the language of axiomatic stable homotopy theory, this adjunction is a {\em geometric morphism} (cf. \cite[$3.4$]{asht}). \\
 \\
Corollary \ref{isapt} supports the point of view that the finite stable homotopy category ${\mathcal S}_0$ could be a category of perfect complexes over $\Specfun$, since it says that the universal locus for supports of objects in ${\mathcal S}_{0}$ is $\Specfun$.
%However, in the light of the reconstruction , and the nice homeomorphism $\spec {\mathbb Z}\simeq \spc (D^{\perf} (\spec 
\end{rema}
\newpage
 \section{Analogies with non-noetherian rings}
 \noindent
 The fact that there are infinite chains of thick subcategories of ${\mathcal S}_{0}$ makes ${\mathcal S}_{0}$ resemble a non-noetherian ring. In the appendix of \cite{neeman}, M. B\"okstedt gives a way of identifying the closed sets of $\spec  R$ for an arbitrary (not necessarily noetherian) ring $R$ via its corresponding triangulated category of complexes. In this section, we apply his method to the categories ${\mathcal S}_0$ and ${\mathcal S}^{p}_0$.\\
\begin{defi} \label{filtering}
Let ${\mathcal K}$ be a triangulated category. A thick subcategory ${\mathcal A}\subset {\mathcal K}$ is {\em principal} if it is the triangulated subcategory generated by an element $P\in {\mathcal A}$. If this is the case, we use the notation ${\mathcal A}=\langle P\rangle$. We denote the set of principal subcategories of ${\mathcal K}$ by $PS({\mathcal K})$.\\
 \\
A subset ${\mathcal F}\subset PS({\mathcal K})$ is said to be {\em filtering} if \\
 $(FS1)$ $\langle Q\rangle\in {\mathcal F}$ and $\langle Q\rangle \subset \langle P\rangle$ implies $\langle P\rangle\in {\mathcal F}$ for all $\langle P\rangle\in PS({\mathcal K})$, and\\
 $(FS2)$ for all $\langle P\rangle, \langle Q\rangle\in {\mathcal F}$, there exists a $\langle R\rangle \in {\mathcal F}$ such that $\langle R\rangle \subset \langle P\rangle$ and $\langle R\rangle \subset \langle Q\rangle$.
\end{defi}
\noindent
According to \cite[Proposition $A.7$]{neeman}, closed subsets of $\spec  R$ are in bijection with filtering subsets of $PS(D^{\perf}(\spec  R))$ via the maps
$$\begin{array}{rl} 
Y\mapsto \{\langle P\rangle  |  Y\subset \supph(P)\},
\end{array}\,$$
and
$$\begin{array}{rl} 
{\mathcal F}\mapsto \bigcap_{\langle P\rangle\in {\mathcal F}}\supph (P).
\end{array}\,$$
 \\
 After stating this, M. B\"okstedt invites the reader to see what it gives for ${\mathcal S}_0$. 
 \begin{theor}{\em (Classification of filtering subsets)}\label{stolt2}\\
 $(i)$ The principal thick subcategories of ${\mathcal S}_0$ are precisely ${\mathcal S}_0$ and ${\mathcal S}_{n}^p$, for a prime number $p$ and an integer $n\geq 1$.\\
  $(ii)$ The filtering subsets of $PS({\mathcal S}_0)$ are precisely $PS({\mathcal S}_0)$, $\{{\mathcal S}_0\}$, and the subsets of the form
  $$\begin{array}{rl} 
\{{\mathcal S}_{n}^p\}_{n\leq N}\cup \{{\mathcal S}_0\},
\end{array}\,$$
for a prime number $p$, and an integer $N\geq 1$.\\
$(iii)$ All thick subcategories of ${\mathcal S}^{p}_0$ are principal.\\
  $(iv)$ The filtering subsets of $PS({\mathcal S}^{p}_0)$ are precisely $PS({\mathcal S}^{p}_0)$, and the subsets of the form
  $$\begin{array}{rl} 
\{{\mathcal S}_{n}^p\}_{n\leq N},
\end{array}\,$$
for an integer $N\geq 0$.
 \end{theor}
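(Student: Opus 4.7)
The plan is to classify the principal thick subcategories first (parts $(i)$ and $(iii)$), and then use this classification together with the inclusion lattice --- especially Lemma \ref{key} --- to read off the filtering subsets (parts $(ii)$ and $(iv)$).

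For $(i)$ and $(iii)$, I would begin by showing each of the listed candidates is principal. The sphere spectrum $S$ generates $\mathcal{S}_0$ as a triangulated subcategory, since every finite spectrum is an iterated cofiber of spheres, so $\mathcal{S}_0 = \langle S\rangle$. For each prime $p$ and $n\geq 1$, the existence of a finite $p$-local spectrum of type $n$ (classical, via iterated $v_n$-self maps or generalized Moore spectra) combined with Hopkins--Smith's Theorem \ref{nilpotence} yields $\mathcal{S}_n^p = \langle P\rangle$ for any such $P$. The same argument handles $(iii)$ directly, since Theorem \ref{nilpotence} already enumerates the thick subcategories of $\mathcal{S}_0^p$ as the $\mathcal{S}_n^p$. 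For the converse direction in $(i)$, the input is Chebolu's Theorem \ref{chebolus}: every thick subcategory of $\mathcal{S}_0$ is either $\mathcal{S}_0$ or a multi-prime $\coprod_{p\in S}\mathcal{S}_{n_p}^p$. I would argue that a multi-prime subcategory with $|S|\geq 2$ cannot be the triangulated subcategory generated by a single element --- any candidate generator decomposes at distinct primes, and its prime-summand retracts are not members of the triangulated closure of a single object.

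For $(ii)$ and $(iv)$, I would translate the classification into a partial order. In $\mathcal{S}_0^p$ the principal thick subcategories form the single descending chain $\mathcal{S}_0^p \supset \mathcal{S}_1^p \supset \mathcal{S}_2^p \supset \cdots$, so filtering subsets --- forced to be upward-closed by $(FS1)$ --- are just initial segments, and $(FS2)$ is automatic inside a chain. If the subset reaches the zero subcategory, it equals $PS(\mathcal{S}_0^p)$; otherwise it is $\{\mathcal{S}_n^p\}_{n\leq N}$. In $\mathcal{S}_0$, the partial order places $\mathcal{S}_0$ at the top and parallel chains $\mathcal{S}_1^p \supset \mathcal{S}_2^p \supset \cdots$ for each prime, mutually incomparable with pairwise intersection zero by Lemma \ref{key}. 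Given a filtering $\mathcal{F}\subset PS(\mathcal{S}_0)$, if $\mathcal{F}$ contains elements from two distinct prime chains, then $(FS2)$ produces a common sub-element $\langle R\rangle \subset \mathcal{S}_n^p \cap \mathcal{S}_m^q = 0$, so $\langle R\rangle = 0$, and then $(FS1)$ forces $\mathcal{F} = PS(\mathcal{S}_0)$. Otherwise $\mathcal{F}$ is contained in the union of a single prime chain together with the top element $\mathcal{S}_0$, and $(FS1)$ yields the advertised form $\{\mathcal{S}_n^p\}_{n\leq N}\cup\{\mathcal{S}_0\}$, or the degenerate case $\{\mathcal{S}_0\}$.

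The main obstacle is the converse in $(i)$: ruling out that a multi-prime thick subcategory $\coprod_{p\in S}\mathcal{S}_{n_p}^p$ with $|S|\geq 2$ is principal. The crux is the distinction between the triangulated subcategory and the thick subcategory generated by a single element. A wedge-decomposable generator such as $S/6 \simeq S/2\vee S/3$ has its prime summands only as retracts of the generator, not as members of the triangulated closure; the argument must make this precise using the $\mathrm{Hom}$-triviality between spectra at distinct primes, exhibiting the triangulated closure explicitly enough to see that it fails to equal the larger thick subcategory it is a candidate for.
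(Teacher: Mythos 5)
Your overall route coincides with the paper's: prove (i) and (iii) from Theorems \ref{nilpotence} and \ref{chebolus}, then read (ii) and (iv) off the resulting poset of principal subcategories, using Lemma \ref{key} to show that chains at distinct primes have no common nonzero lower bound. Parts (iii) and (iv), your treatment of (ii), and the ``easy'' direction of (i) match the paper's proof essentially step for step (the paper phrases the two-primes step in (ii) as a contradiction with property (M4) rather than as ``$\langle R\rangle=0$, so (FS1) forces $\mathcal{F}=PS(\mathcal{S}_0)$'', but the content is the same).

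The step you flag as unresolved --- ruling out that $\coprod_{p\in S}\mathcal{S}_{n_p}^{p}$ with $|S|\geq 2$ is principal --- is indeed the crux, and you should be aware that the paper does not close it either: its proof simply asserts that a generator $P$ of such an $\mathcal{A}$ ``belongs to $\mathcal{S}_n^p$ for some $p$ and $n$'', which is exactly what fails for the wedge generators $P=\bigvee_{p}P_p$ you are worried about. Two further points about your proposed repair. First, if $\langle P\rangle$ denotes the \emph{thick} subcategory generated by $P$ (the reading under which Hopkins--Smith immediately gives $\mathcal{S}_n^p=\langle P\rangle$ for a type-$n$ spectrum, and the notion used in Neeman's appendix), then $\mathcal{S}_{n_2}^{2}\amalg\mathcal{S}_{n_3}^{3}=\langle P_2\vee P_3\rangle$ \emph{is} principal, since each $P_q$ is a retract of the wedge; so the converse of (i) can only hold under the strict ``triangulated subcategory generated by $P$'' reading. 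Second, under that strict reading the precise tool is Thomason's classification of dense triangulated subcategories: $\langle P\rangle$ equals its thick closure if and only if $[P]$ generates $K_0$ of that closure. For $P=S/2\vee S/3$ the multiplicative torsion Euler characteristic $[X]\mapsto\prod_i\lvert H_i(X;\mathbb{Z})\rvert^{(-1)^i}$ shows $[S/2]\notin\mathbb{Z}\cdot[S/2\vee S/3]$ and settles that case, but this invariant is trivial on type-$\geq 2$ spectra such as $S/(p,v_1)$, so the general case reduces to a computation of $K_0(\mathcal{S}_n^p)$ that neither you nor the paper carries out. In short: your proposal is faithful to the paper's strategy, but the gap you identify is genuine, and consulting the paper's proof will not repair it.
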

 \begin{proof}
 $(i)$ Note first that ${\mathcal S}_0$ and the ${\mathcal S}_{n}^p$'s are principal: ${\mathcal S}_0$ is generated by the sphere spectrum, and the strict inclusions of Theorem \ref{nilpotence} imply that for all $n$, ${\mathcal S}_{n}^p$ is generated by any element of ${\mathcal S}_{n}^p\setminus {\mathcal S}_{n+1}^p$. Using the classification in Theorem \ref{chebolus}, let ${\mathcal A}=\coprod_{p\in S} {\mathcal S}_{n_p}^{p}$ be a thick subcategory. Assuming that ${\mathcal A}=\langle P \rangle$ is principal, implies that there is a $p$ and an $n$ such that the generator $P$ belongs to ${\mathcal S}_{n}^{p}$. Since ${\mathcal S}_{n}^{p}$ is thick, it follows that ${\mathcal A}={\mathcal S}_{n}^{p}$.\\
 $(ii)$ Clearly, the subsets in the statement satisfy $(FS1)$ and $(FS2)$ in Definition \ref{filtering}. By $(FS1)$, a filtering subset containing ${\mathcal S}_{N}^p$ must also contain ${\mathcal S}_{n}^p$ for all $n\leq N$. It remains to show that for $p\neq q$, no filtering subset can contain both ${\mathcal S}_{k}^p$ and ${\mathcal S}_{l}^q$. By $(PS2)$, this implies the existence of integers $i$ and $j$ such that ${\mathcal S}_{i}^p={\mathcal S}_{j}^q$. By Lemma \ref{key}, ${\mathcal S}_{i}^p$ must then consist entirely of contractible spectra, which contradicts property $(M4)$ of Morava K-theory. \\
 \newpage
 \noindent
 $(iii)$ follows by applying the argument in the proof of $(i)$ to ${\mathcal S}^{p}_0$.\\
 $(iv)$ follows directly from $(iii)$ together with the axioms $(FS1)$ and $(FS2)$.
 \end{proof}
 \noindent
 Note that filtering subsets of $PS({\mathcal S}_0)$ are {\em not} in bijection with closed sets of $\spc({\mathcal S}_0)$, since there are infinitely many filtering subsets, but only one closed set. However, the filtering subsets of $PS({\mathcal S}^{p}_0)$ are precisely the open sets of $\spc({\mathcal S}^{p}_0)$, so in this case they are certainly in bijection with the closed sets. \\
 \\
\section*{Acknowledgments}
\noindent
The author would like to thank Ryszard Nest, Jesper Grodal, and Arvo P\"art. The author was supported by Marie Curie grant no.
MRTN-CT-2006-031962, and by the Danish National Research Foundation through the Centre for Symmetry and Deformation.\\
 \\

\end{document}